\newtheorem{theorem}{Theorem}[section]
\newtheorem{corollary}[theorem]{Corollary}
\newtheorem{proposition}[theorem]{Proposition}
\newtheorem{lemma}[theorem]{Lemma}
\newtheorem*{theorem*}{Theorem}
\theoremstyle{definition}
\newtheorem{definition}[theorem]{Definition}
\newtheorem*{notation}{Notation}
\newtheorem{remark}[theorem]{Remark}
\def\sideremark#1{\ifvmode\leavevmode\fi\vadjust{
		\vbox to0pt{\hbox to 0pt{\hskip\hsize\hskip1em
				\vbox{\hsize3cm\tiny\raggedright\pretolerance10000
					\noindent #1\hfill}\hss}\vbox to8pt{\vfil}\vss}}}
\newcommand{\R}{{\mathbb R}}
\renewcommand{\H}{{\mathbb H}}
\newcommand{\C}{{\mathbb C}}
\newcommand{\bC}{\mathbb{C}}
\newcommand{\bR}{\mathbb{R}}
\newcommand{\beq}{\begin{equation}}
\newcommand{\eeq}{\end{equation}}
\renewcommand{\a}{\alpha}
\renewcommand{\b}{\beta}
\renewcommand{\d}{\delta}
\newcommand{\f}{\varphi}
\newcommand{\g}{\gamma}
\renewcommand{\o}{\omega}
\newcommand{\cA}{\mathcal{A}}
\newcommand{\cQ}{\mathcal{Q}}
\newcommand{\cV}{\mathcal{V}}
\newcommand{\psip}{\psi}
\newcommand{\psim}{\widehat{\psi}}
\newcommand{\Wd}{\sigma}
\newcommand{\Oup}{\left[\Omega^{\scriptscriptstyle 1,1}_{\scriptscriptstyle0}(M)\right]}
\newcommand{\Odp}{\left\llbracket\Omega^{\scriptscriptstyle 2,1}_{\scriptscriptstyle0}(M)\right\rrbracket}
\newcommand{\U}{{\mathrm U}}
\newcommand{\SU}{{\mathrm{SU}}}
\newcommand{\SO}{{\mathrm {SO}}}
\newcommand{\GL}{{\mathrm {GL}}}
\newcommand{\G}{{\mathrm G}}
\newcommand{\K}{{\mathrm K}}
\newcommand{\SL}{{\mathrm {SL}}}
\newcommand{\W}{\wedge}
\DeclareMathOperator\End{End}
\DeclareMathOperator\Ad{Ad}
\DeclareMathOperator\ad{ad}
\newcommand{\Ric}{{\rm Ric}}
\newcommand{\Scal}{{\rm Scal}}
\newcommand{\frg}{\mathfrak{g}}
\newcommand{\frh}{\mathfrak{h}}
\newcommand{\frk}{\mathfrak{k}}
\newcommand{\frm}{\mathfrak{m}}
\newcommand{\frsl}{\mathfrak{sl}}
\renewcommand{\gg}{\mathfrak{g}}
\newcommand{\gh}{\mathfrak{h}}
\newcommand{\gk}{\mathfrak{k}}
\newcommand{\gl}{\mathfrak{l}}
\newcommand{\gm}{\mathfrak{m}}
\newcommand{\gt}{\mathfrak{t}}
\newcommand{\gz}{\mathfrak{z}}
\newcommand{\so}{\mathfrak{so}}
\newcommand{\st}{\ |\ }
\newcommand{\diag}{{\rm diag}}
\newcommand{\sst}{\scriptscriptstyle}
\newcommand{\VSHF}{\cV_{\sst\mathrm{SHF}}}
\numberwithin{equation}{section}
\title{Homogeneous symplectic half-flat 6-manifolds}
\author{Fabio Podest\`a and Alberto Raffero}
\subjclass[2010]{53C30, 53C15, 53D05}
\keywords{Homogeneous spaces, symplectic half-flat, Hermitian Ricci tensor}
\thanks{The authors were supported by GNSAGA of INdAM}
\address{Dipartimento di Matematica e Informatica ``U.~Dini'' \\ Universit\`a degli Studi di Firenze\\ Viale Morgagni 67/a\\ 50134 Firenze\\ Italy}
\email{podesta@math.unifi.it, alberto.raffero@unifi.it}
\begin{document}
\begin{abstract}
We consider 6-manifolds endowed with a symplectic half-flat SU(3)-structure and acted on by a transitive Lie group G of automorphisms. 
We review a classical result allowing to show the non-existence of compact non-flat examples. 
In the noncompact setting, we classify such manifolds under the assumption that G is semisimple. 
Moreover, in each case we describe all invariant symplectic half-flat SU(3)-structures up to isomorphism, showing that the Ricci tensor is always Hermitian  
with respect to the induced almost complex structure. This last condition is characterized in the general case. 
\end{abstract}

\maketitle

\section{Introduction}
This is the first of two papers aimed at studying symplectic half-flat 6-manifolds acted on by a Lie group $\G$ of automorphisms. 
Here, we focus on the homogeneous case, i.e., on transitive $\G$-actions, while in a forthcoming paper we shall consider cohomogeneity one actions. 

An $\SU(3)$-structure on a six-dimensional manifold $M$ is given by an almost Hermitian structure $(g,J)$ and a complex volume form $\Psi = \psip+i\psim$ of constant length. 
By \cite{Hit1}, the whole data depend only on the fundamental 2-form $\omega\coloneqq g(J\cdot,\cdot)$ and on the real 3-form $\psip$,  
provided that they fulfill suitable conditions. 

The obstruction for the holonomy group of $g$ to reduce to $\SU(3)$ is represented by the intrinsic torsion, which is encoded in the 
exterior derivatives of $\omega$, $\psip$, and $\psim$ \cite{ChSa}.  
When all such forms are closed, the intrinsic torsion vanishes identically and the $\SU(3)$-structure is said to be {\em torsion-free}. 

In this paper, we focus on 6-manifolds endowed with an $\SU(3)$-structure $(\omega,\psip)$ such that $d\omega=0$ and $d\psip=0$, 
known as {\em symplectic half-flat} in literature (SHF for short). 
These structures are half-flat in the sense of \cite{ChSa}, and their underlying almost Hermitian structure $(g,J)$ is almost K\"ahler.  

Being half-flat, SHF structures can be used to construct local metrics with holonomy contained in $\G_2$ by solving the so-called 
Hitchin flow equations \cite{Hit1}. 
Moreover, it is known that every oriented hypersurface $M$ of a $\G_2$-manifold is endowed with a half-flat $\SU(3)$-structure, which is SHF when  
$M$ is minimal with $J$-invariant second fundamental form~\cite{MarCab}. 
Starting with a SHF 6-manifold $(M,\omega,\psip)$, it is also possible to obtain examples of closed $\G_2$-structures on the Riemannian product $M\times {\mathbb S}^1$, 
and on the mapping torus of a diffeomorphism of $M$ preserving $\omega$ and $\psip$ (see e.g.~\cite{ManTh}). 

In theoretical physics, compact SHF 6-manifolds arise as solutions of type IIA supersymmetry equations \cite{FiUg}.

SHF 6-manifolds were first considered in \cite{DeB}, and then in \cite{DeBTom,DeBTom0}. 
In \cite{DeBTom0}, equivalent characterizations of SHF structures in terms of the Chern connection $\nabla$ were given, showing that $\mathrm{Hol}(\nabla)\subseteq\SU(3)$. 
Moreover, as $\psip$ defines a calibration on $M$ in the sense of \cite{HarLaw}, the authors introduced and studied special Lagrangian submanifolds in this setting.  

In \cite{BedVez}, the Ricci tensor of an $\SU(3)$-structure was described in full generality. 
Using this result, it was proved that SHF structures cannot induce an Einstein metric unless they are torsion-free. 
It is then interesting to investigate the existence of SHF structures whose Ricci tensor has special features. 
By the results in \cite{BlIa}, the Ricci tensor being $J$-Hermitian seems to be a meaningful condition. 
In Proposition \ref{JRic}, we characterize this property in terms of the intrinsic torsion. 

Recently, A.~Fino and the second author showed that SHF structures fulfilling some extra conditions can be used to obtain explicit solutions of the Laplacian $\G_2$-flow 
on the product manifold $M\times{\mathbb S}^1$ \cite{FiRa}. In particular, the class of SHF structures satisfying the required hypothesis includes those having $J$-Hermitian Ricci tensor. 

Most of the known examples of SHF 6-manifolds consist of six-dimensional simply connected Lie groups endowed with a left-invariant SHF structure. 
The classification of nilpotent Lie groups admitting such structures was given in \cite{ConTom}, 
while the classification in the solvable case was obtained in \cite{FMOU}. 
Previously, some examples on unimodular solvable Lie groups appeared in \cite{DeBTom, FrSch,TomVez}. 
Moreover, in \cite{TomVez} a family of non-homogeneous SHF structures on the 6-torus was constructed. 

In the present paper, we look for new examples in the homogeneous setting.  
We first show that compact homogeneous SHF 6-manifolds with invariant SHF structure are exhausted by flat tori (Corollary \ref{inexistencecpt}). 
This result is based on a classical theorem concerning compact almost K\"ahler manifolds acted on transitively by a semisimple automorphism group  \cite{WoGrII}.  
We then focus on the noncompact case $\G/\K$ with $\G$ semisimple.  
We provide a full classification in Theorem \ref{classThm}, showing that only the twistor spaces of $\R\H^4$ and $\C\H^2$ occur. 
Furthermore, we prove that the former admits a unique invariant SHF structure up to homothety, while the latter is endowed precisely with a one-parameter family 
of invariant SHF structures which are pairwise non-homothetic and non-isomorphic. 
We point out that all almost K\"ahler structures underlying the SHF structures in this family share the same Chern connection, which coincides with the canonical connection of the 
homogeneous space. 
Finally, in both cases a representation theory argument allows to conclude that the Ricci tensor is $J$-Hermitian. 

\begin{notation}
Throughout the paper, we shall denote Lie groups by capital letters, e.g.~$\G$, and the corresponding Lie algebras by gothic letters, e.g.~$\gg$.  
\end{notation}

\section{Preliminaries}
\subsection{Stable 3-forms in six dimensions}
A $k$-form on an $n$-dimensional vector space $V$ is said to be {\em stable} if its orbit under the natural action of $\GL(V)$ is open in $\Lambda^k(V^*)$. 
Among all possible situations that may occur (see e.g.~\cite{Hit,Hit1,Rei}), in this paper we will be concerned with stable 3-forms in six dimensions. 

Assume that $V$ is real six-dimensional, and fix an orientation by choosing a volume form $\Omega\in\Lambda^6(V^*)$. 
Then, every 3-form $\rho\in\Lambda^3(V^*)$ gives rise to an endomorphism $S_\rho:V\rightarrow V$ via the identity 
\begin{equation}\label{P}
\iota_v\rho\W\rho\W\eta = \eta(S_\rho(v))\,\Omega, 
\end{equation}
for all $\eta\in\Lambda^1(V^*)$, where $\iota_v\rho$ denotes the contraction of $\rho$ by the vector $v\in V.$  
By \cite{Hit}, $S_\rho^2=P(\rho)\mathrm{Id}_V$ for some irreducible polynomial $P(\rho)$ of degree 4, and $\rho$ is stable if and only if $P(\rho)\neq0$. 
The space $\Lambda^3(V^*)$ contains two open orbits of stable forms defined by the conditions $P>0$ and $P<0$. 
The $\GL^{\sst+}(V)$-stabilizer of a 3-form $\rho$ belonging to the latter is isomorphic to $\SL(3,\C)$. In this case, $\rho$ induces a complex structure 
\begin{equation}\label{Jpsi}
J_\rho:V\rightarrow V,\quad J_\rho\coloneqq \frac{1}{\sqrt{-P(\rho)}}\,S_\rho,
\end{equation}
and a complex $(3,0)$-form $\rho+i\widehat\rho$, where $\widehat\rho\coloneqq J_\rho\rho =\rho(J_\rho\cdot,J_\rho\cdot,J_\rho\cdot)=-\rho(J_\rho\cdot,\cdot,\cdot)$. 
Moreover, the 3-form $\widehat\rho$ is stable, too, and $J_{\widehat\rho}=J_\rho$. 
\begin{remark}
Note that $S_\rho$, $P(\rho)$ and $J_\rho$ depend both on $\rho$ and on the volume form $\Omega$. 
In particular, after a scaling $(\rho,\Omega)\mapsto(c\rho,\lambda\Omega)$, $c,\lambda\in\R\smallsetminus\{0\}$, they transform as follows
\[
\frac{c^2}{\lambda}\, S_\rho,\qquad \frac{c^4}{\lambda^2}\,P(\rho),\qquad \frac{\left|\lambda\right|}{\lambda}\,J_\rho.
\]
Thus, the sign of $P(\rho)$ does not depend on the choice of the orientation.  
\end{remark}
\subsection{Symplectic half-flat 6-manifolds}
Let $M$ be a connected six-dimensional manifold. An $\SU(3)$-structure on $M$ is an $\SU(3)$-reduction of the structure group of its frame bundle.  
By \cite{Hit1}, this is characterized by the existence of a non-degenerate 2-form $\omega\in\Omega^2(M)$ and a stable 3-form $\psi\in\Omega^3(M)$ 
with $P(\psi_x)<0$ for all $x\in M,$ fulfilling the following three properties. First, the {\em compatibility condition}
\begin{equation}\label{compcond}
\omega\W\psi=0,
\end{equation}
which guarantees that $\omega$ is of type $(1,1)$ with respect to the almost complex structure $J\in\End(TM)$ induced by $\psi$ and by the volume form $\frac{\omega^3}{6}$. 
Second, the {\em normalization condition}
\begin{equation}\label{normalization}
\psip\W\psim = \frac23\,\omega^3,
\end{equation}
where $\psim\coloneqq J\psip$. 
Finally, the positive definiteness of the symmetric bilinear form 
\[
g\coloneqq\omega(\cdot,J\cdot). 
\] 
Note that the pair $(g,J)$ is an almost Hermitian structure with fundamental form $\omega$, and that $\psip+i\psim$ is a complex volume form on $M.$

Given an $\SU(3)$-structure $(\omega,\psip)$ on $M,$ we denote by $*:\Omega^k(M)\rightarrow\Omega^{6-k}(M)$ the Hodge operator defined by the Riemannian metric $g$ 
and the volume form $\frac{\omega^3}{6}$, and we indicate by $\left|\,\cdot\,\right|$ the induced pointwise norm on $\Omega^k(M)$.  
When $k=3,4$, the irreducible decompositions of the $\SU(3)$-modules $\Lambda^{k}\left({\R^6}^*\right)$ give rise to the splittings 
\begin{equation}\label{3forms} 
\Omega^{3}(M) = \mathcal{C}^\infty(M)\,\psip \oplus \mathcal{C}^\infty(M)\,\psim \oplus \left\llbracket\Omega^{2,1}_{0}(M)\right\rrbracket 
\oplus \Omega^{1}(M)\W\omega,
\end{equation}
\begin{equation}\label{4forms}
\Omega^{4}(M) = \mathcal{C}^\infty(M)\,\omega^2 \oplus \left[\Omega^{1,1}_{0}(M)\right] \W\omega \oplus\Omega^{1}(M)\W\psip,
\end{equation}
where
\[
\left[\Omega^{1,1}_{0}(M)\right]	\coloneqq	\left\{\kappa\in\Omega^{2}(M)\st \kappa\W\omega^{2}=0,~J\kappa=\kappa\right\}
\]
is the space of primitive 2-forms of type $(1,1)$, and 
\[ 
\left\llbracket\Omega^{2,1}_{0}(M)\right\rrbracket \coloneqq \left\{\f\in\Omega^{3}(M) \st \f\W\omega=0,~\f\W\psi=\f\W\widehat\psi=0 \right\}
\]
is the space of primitive 3-forms of type $(2,1)+(1,2)$ (see e.g.~\cite{BedVez,ChSa}). 

By \cite{ChSa}, the intrinsic torsion of $(\omega,\psip)$ is determined by $d\omega$, $d\psip$, and $d\psim$. 
In particular, it vanishes identically if and only if all such forms are zero. 
When this happens, the Riemannian metric $g$ is Ricci-flat, ${\rm Hol}(g)$ is a subgroup of $\SU(3)$, and the $\SU(3)$-structure is said to be {\em torsion-free}. 

A six-dimensional manifold $M$ endowed with an $\SU(3)$-structure $(\omega,\psip)$ is called {\em symplectic half-flat} (SHF for short) if both $\omega$ and $\psip$ are closed. 
By \cite[Thm.~1.1]{ChSa}, in this case the intrinsic torsion can be identified with a unique 2-form $\Wd\in\Oup$ such that
\begin{equation}\label{SHFeqn}
d\psim=\Wd\W\omega
\end{equation}
(cf.~\eqref{4forms}). 
We shall refer to $\Wd$ as the {\em intrinsic torsion form} of  the {\em SHF structure} $(\omega,\psip)$.  
It is clear that $\Wd$ vanishes identically if and only if the $\SU(3)$-structure is torsion-free. 
When the intrinsic torsion is not zero, the almost complex structure $J$ is non-integrable,  
and the underlying almost Hermitian structure $(g,J)$ is (strictly) almost K\"ahler. 

Since $\Wd$ is a primitive 2-form of type $(1,1)$, it satisfies the identity $*\Wd=-\Wd\W\omega$. 
Using this together with  \eqref{SHFeqn}, 
it is possible to show that $\Wd$ is coclosed, and that its exterior derivative has the following expression with respect to the decomposition \eqref{3forms} of $\Omega^{3}(M)$
\begin{equation}\label{dw2}
d\Wd = \frac{\left|\Wd\right|^2}{4}\psip +\nu,
\end{equation}
for a unique $\nu\in\Odp$ (see e.g.~\cite[Lemma 5.1]{FiRa} for explicit computations).

\section{Symplectic half-flat SU(3)-structures with $J$-Hermitian Ricci tensor}
In this section, we discuss the curvature properties of a SHF 6-manifold $(M,\omega,\psip)$.  
We begin reviewing some known facts from \cite{BedVez}. 

By \cite[Thm.~3.4]{BedVez}, the scalar curvature of the metric $g$ induced by $(\omega,\psip)$ is given by
\begin{equation}\label{ScalSHF}
\mbox{Scal}(g) = -\frac12\left|\Wd\right|^2.
\end{equation}
Therefore, it is zero if and only if the $\SU(3)$-structure is torsion-free. 

The Ricci tensor of $g$ belongs to the space $\mathcal{S}^{2}(M)$ of symmetric 2-covariant tensor fields on $M.$  
The $\SU(3)$-irreducible decomposition of $\mathcal{S}^{2}({\R^6}^*)$ induces the splitting
\[
\mathcal{S}^{2}(M)= C^\infty(M)\,g\oplus\mathcal{S}^{2}_{\sst+}(M)\oplus\mathcal{S}^{2}_{\sst-}(M),
\]
where 
\[
\mathcal{S}^{2}_{\sst+}(M) \coloneqq \left\{h\in\mathcal{S}^{2}(M)\st Jh=h \mbox{ and } {\rm tr}_{g}h=0\right\},\quad 
\mathcal{S}^{2}_{\sst-}(M) \coloneqq \left\{h\in\mathcal{S}^2(M)\st Jh=-h\right\}.
\]
Consequently, we can write 
\[
{\rm Ric}(g) = \frac16\,{\rm Scal}(g)g +  {\rm Ric}^0(g),
\]
and the traceless part ${\rm Ric}^0(g)$ of the Ricci tensor belongs to $\mathcal{S}^2_{\scriptscriptstyle+}(M)\oplus\mathcal{S}^2_{\scriptscriptstyle-}(M)$. 
It follows from \cite[Thm.~3.6]{BedVez} that for a SHF structure
\begin{equation}\label{ric0}
{\rm Ric}^0(g) = \pi_{\sst+}^{-1}\left(\frac14*(\Wd\W\Wd)+\frac{1}{12}\left|\Wd\right|^2\omega\right) + \pi_{\sst-}^{-1}(2\nu),
\end{equation}
where $\nu$ is the $\Odp$-component of $d\Wd$ (cf.~\eqref{dw2}), 
and the maps $\pi_{\sst+}: \mathcal{S}^2_{\sst+}(M)\rightarrow\Oup$ and $\pi_{\sst-}:\mathcal{S}^2_{\sst-}(M)\rightarrow\Odp$ 
are induced by the pointwise $\SU(3)$-module isomorphisms given in \cite[$\S$2.3]{BedVez}. 

Equation \eqref{ric0} together with a representation theory argument allows to show that the Riemannian metric $g$ induced by a SHF structure is Einstein, 
i.e., ${\rm Ric}^0(g)=0$, if and only if the intrinsic torsion vanishes identically \cite[Cor.~4.1]{BedVez}. 
In light of this result, it is natural to ask which distinguished properties $g$ might satisfy. 
Since the almost Hermitian structure $(g,J)$ underlying a SHF structure is almost K\"ahler, the Ricci tensor of $g$ being $J$-Hermitian seems a meaningful condition.   
Indeed, on a compact symplectic manifold $(M,\omega)$, almost K\"ahler structures with $J$-Hermitian Ricci tensor are the critical points of the Hilbert functional 
restricted to the space of all almost K\"ahler structures with fundamental form $\omega$ (see \cite{ApDr,BlIa}).

Using the above decomposition of ${\rm Ric}(g)$, we can show that SHF structures with $J$-Hermitian Ricci tensor are characterized by the expression of $d\Wd$.
\begin{proposition}\label{JRic}
The Ricci tensor of the metric $g$ induced by a SHF structure $(\omega,\psip)$ is Hermitian with respect to the corresponding almost complex structure $J$ if and only if 
\begin{equation}\label{dw2Jric}
d\Wd = \frac{\left|\Wd\right|^2}{4}\psip.
\end{equation} 
When this happens, the scalar curvature of $g$ is constant. 
\end{proposition}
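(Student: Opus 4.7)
The plan is to read the proposition off the Ricci decomposition \eqref{ric0} quoted from \cite{BedVez}, using that the splitting $\cS^2(M) = C^\infty(M)g \oplus \cS^2_{\sst+}(M)\oplus\cS^2_{\sst-}(M)$ exactly separates the $J$-Hermitian from the $J$-anti-Hermitian part.

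More concretely, first I would observe that the trace part $\tfrac16\Scal(g)\,g$ is automatically $J$-invariant, so ${\rm Ric}(g)$ is $J$-Hermitian if and only if the $\cS^2_{\sst-}(M)$-component of ${\rm Ric}^0(g)$ vanishes. By \eqref{ric0}, that component equals $\pi_{\sst-}^{-1}(2\nu)$, and since $\pi_{\sst-}$ is a pointwise $\SU(3)$-module isomorphism, the $J$-Hermiticity of ${\rm Ric}(g)$ is equivalent to $\nu=0$. Plugging $\nu=0$ into \eqref{dw2} immediately gives the equation
\[
d\Wd = \frac{|\Wd|^2}{4}\,\psip,
\]
and conversely this equation identifies the $\Odp$-component of $d\Wd$ in the decomposition \eqref{3forms} as zero, so $\nu=0$. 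This establishes the equivalence.

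For the second assertion, assuming \eqref{dw2Jric} holds I would apply $d$ to both sides. Since $d\psip=0$ (the SHF hypothesis), this yields $0 = \tfrac14 d(|\Wd|^2)\W\psip$. The wedge map $\Omega^1(M)\to\Omega^4(M)$, $\alpha\mapsto \alpha\W\psip$, is fiberwise injective: indeed $\Omega^1(M)\W\psip$ appears as a direct summand in the $\SU(3)$-decomposition \eqref{4forms} of $\Omega^4(M)$. Hence $d(|\Wd|^2)=0$, so $|\Wd|^2$ is constant, and by \eqref{ScalSHF} the scalar curvature $\Scal(g)=-\tfrac12|\Wd|^2$ is constant as well.

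The only step that requires a moment's care is the injectivity of $\alpha\mapsto\alpha\W\psip$, but this is immediate from the irreducible decomposition \eqref{4forms} already cited from the preliminaries; the rest is just bookkeeping on the $\SU(3)$-type components of ${\rm Ric}^0(g)$ and of $d\Wd$.
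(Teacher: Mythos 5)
Your argument is correct and coincides with the paper's own proof: both parts proceed by identifying $J$-Hermiticity with the vanishing of the $\mathcal{S}^2_{\sst-}(M)$-component $\pi_{\sst-}^{-1}(2\nu)$ in \eqref{ric0}, and then differentiating \eqref{dw2Jric} and using injectivity of wedging 1-forms with $\psip$ to conclude $\left|\Wd\right|^2$ is constant. Your extra remark justifying that injectivity via the summand $\Omega^1(M)\W\psip$ in \eqref{4forms} is a small refinement of a step the paper states without proof, but the approach is the same.
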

\begin{proof}
The Ricci tensor of $g$ is $J$-Hermitian if and only if it has no component in $\mathcal{S}^2_{\sst-}(M)$. 
By \eqref{ric0}, this happens if and only if $\nu=0$, i.e., if and only if $d\Wd$ is given by \eqref{dw2Jric}. 

Taking the exterior derivative of both sides of \eqref{dw2Jric}, we get $d\left|\Wd\right|^2\W\psip=0$. 
This implies that $\left|\Wd\right|^2$ is constant, since wedging 1-forms by $\psip$ is injective. 
The second assertion follows then from \eqref{ScalSHF}. 
\end{proof}

Examples of SHF 6-manifolds with $J$-Hermitian Ricci tensor include the twistor space of an oriented self-dual Einstein 4-manifold of negative scalar curvature 
(cf.~\cite{DavMus} and \cite[$\S$1.2]{Xu}).

\section{Homogeneous symplectic half-flat 6-manifolds}
In this section, we focus on the homogeneous case. More precisely, we shall consider the following class of SHF 6-manifolds. 
\begin{definition}
A {\em homogeneous symplectic half-flat manifold} is the data of a SHF 6-manifold $(M,\omega,\psip)$ and a connected Lie group $\G$ acting transitively 
and almost effectively on $M$ preserving the SHF structure $(\omega,\psip)$. 
\end{definition}

Since the pair $(g,J)$ induced by $(\omega,\psip)$ is a $\G$-invariant almost K\"ahler structure, 
the homogeneous manifold $M$ is $\G$-equivariantly diffeomorphic to the quotient $\G/\K$, where $\K$ is a compact subgroup of $\G$  \cite[Ch.~I, Cor.~4.8]{KNI}.

In what follows, we review some basic facts on homogeneous symplectic and almost complex manifolds, and then we will focus on invariant SHF structures on 
compact and noncompact homogeneous spaces. 

\subsection{Invariant almost K\"ahler structures on homogeneous spaces}\label{SCH} 
Let $\G/\K$ be a homogeneous space with $\K$ compact. 
It is well-known that there exists an $\Ad(\K)$-invariant subspace $\frm$ of $\frg$ such that $\frg=\frk\oplus\frm$. 
Moreover, there is a natural identification of $T_{[\K]}(\G/\K)$ with $\frm$, and every $\G$-invariant tensor on $\G/\K$ 
corresponds to an $\Ad(\K)$-invariant tensor of the same type on $\frm$, which we will denote by the same letter.

From now on, we assume that $\G$ is semisimple. Recall that in such a case the Cartan-Killing form $B$ of $\frg$ is non-degenerate. 
 
Given a $\G$-invariant symplectic form $\o$ on $\G/\K$, the corresponding $\Ad(\K)$-invariant $2$-form $\o\in \Lambda^2(\gm^*)$ can be written as 
$\o(\cdot,\cdot) = B(D\cdot,\cdot)$, where $D\in \End(\gm)$ is a $B$-skew-symmetric endomorphism. 

Extend $D$ to an endomorphism of $\gg$ by setting $D|_{\gk}\equiv 0$. Then, $d\o=0$ if and only if $D$ is a derivation of $\gg$ (see e.g.~\cite{BFR}). 
Since $\gg$ is semisimple, there exists a unique $z\in \gg$ such that $D=\ad(z)$. 
By the $\Ad(\K)$-invariance of $\o$, $z$ is centralized by $\gk$, and since $\o$ is non-degenerate on $\gm$, the Lie algebra $\gk$ coincides with the centralizer of $z$ in $\gg$.  
Consequently, $\K$ is connected. 

Since $\K$ is compact, there exists a maximal torus ${\rm T}\subseteq \K$ whose Lie algebra $\gt$ contains the element $z$. 
Using the results of \cite[Ch.~IX, $\S$4]{Hel}, a standard argument allows to show that the complexification $\gg^{\sst{\bC}}$ has a Cartan subalgebra $\gh$ given by $\gt^{\sst{\bC}}$.  
We can then consider the root space decomposition $\gg^{\sst{\bC}} = \gh \oplus \bigoplus_{\a\in R}\gg_\a$ with respect to $\frh$, 
where $R$ is the relative root system and $\gg_\a$ is the root space corresponding to the root $\a\in R$. 
For any pair $\a,\b\in R$ satisfying $\a+\b\neq0$, the root spaces $\gg_\a$ and $\gg_\b$ are $B$-orthogonal. 
Moreover, for each $\a\in R$ we can always choose an element $E_\a$ of $\gg_\a$ so that $\gg_{\a}= {\bC} E_\a$, $B(E_\a,E_{-\a}) = 1$, and 
\[
[E_\a,E_\b]=
\left\{
\begin{array}{ll}
N_{\a,\b}E_{\a+\b},	&\mbox{if } \a+\b\in R,	\\
H_\a				&\mbox{if } \b=-\a,		\\
0 				&\mbox{otherwise},
\end{array} \right.
\]
with $N_{\a,\b}\in\R\smallsetminus\{0\}$, and $H_\a\in \gh$ defined as $\a(H) = B(H_\a,H)$ for every $H\in \gh$  (see e.g.~\cite[p.~176]{Hel}). 

Since $\gk$ contains a maximal torus, we have the decompositions $\gk^{\sst{\bC}} = \gh \oplus \bigoplus_{\a\in R_\gk}\gg_\a$ and 
$\gm^{\sst{\bC}} = \bigoplus_{\b\in R_\gm}\gg_\b$, for two disjoint subsets $R_\gk,R_\gm\subset R$ such that 
\[
R = R_\gk\cup R_\gm,\qquad (R_\gk + R_\gk)\cap R \subseteq R_\gk,\qquad (R_\gk + R_\gm)\cap R \subseteq R_\gm.
\]

Let $J\in \End(\gm)$ be an $\Ad(\K)$-invariant complex structure on $\gm$. 
Then, its complex linear extension $J\in \End(\gm^{\sst{\bC}})$ is $\ad(\gh)$-invariant and commutes with the antilinear involution $\tau$ 
given by the real form $\gg$ of $\gg^{\sst{\bC}}$. 
Moreover, the $\ad(\gh)$-invariance implies that $J$ preserves each root space $\gg_\a$, $\a\in R_\gm$,  
and determines a splitting $\gm^{\sst{\bC}} = \gm^{1,0}\oplus\gm^{0,1}$, where 
\[
\gm^{1,0} = \bigoplus_{\b\in R_\gm^{\sst+}}\gg_\b,\qquad \gm^{0,1} = \bigoplus_{\b\in R_\gm^{\sst-}}\gg_\b,
\]
and $R_\gm = R_\gm^{\sst+}\cup R_\gm^{\sst-}$,  $R_\gm^{\sst-} = - R_\gm^{\sst+}$. The full $\Ad(\K)$-invariance is equivalent to 
\[
(R_\gk+R_\gm^{\sst+})\cap R \subseteq R_\gm^{\sst+}.
\]

\subsection{Non-existence of compact non-flat homogeneous SHF 6-manifolds}\label{nocptSHF}
We begin reviewing a general result on compact homogeneous almost K\"ahler manifolds $\U/\K$, 
which was proved in \cite[Thm.~9.4]{WoGrII} for $\U$ semisimple.  
\begin{proposition} \label{cptaK}
A compact homogeneous almost K\"ahler manifold  $(M,g,J)$ is  K\"ahler. 
\end{proposition}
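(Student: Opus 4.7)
The plan is to reduce to the semisimple transitive case already handled in \cite[Thm.~9.4]{WoGrII}. First, one may replace the given transitive automorphism group with the identity component $\U$ of $\Aut(M, g, J)$. This is a closed subgroup of $\mathrm{Iso}(M, g)$, which is compact since $M$ is compact; hence $\U$ is itself compact, and it still acts transitively on $M$ because it contains the original transitive group. So one may assume henceforth that the transitive automorphism group is compact and connected.

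Next, pass to the Riemannian universal cover $(\tilde M, \tilde g)$ with the lifted structure $(\tilde g, \tilde J, \tilde \omega)$. A classical consequence of the de Rham decomposition theorem, applied to compact homogeneous Riemannian manifolds, yields an isometric splitting $\tilde M \cong \R^k \times N$ with $N$ compact, simply connected, and carrying no flat de Rham factor. Correspondingly the lifted action factors through a product $\R^k \times \U_{\mathrm{ss}}$, where the abelian part acts by translations on the Euclidean factor while the compact semisimple group $\U_{\mathrm{ss}}$ acts transitively on $N$, both preserving $(\tilde g, \tilde J)$.

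Granted that $\tilde J$ preserves the splitting $T\tilde M = T\R^k \oplus TN$, the argument concludes as follows. On $\R^k$ the restriction of $\tilde J$ is translation-invariant; its Nijenhuis tensor then vanishes identically, and together with the translation-invariant symplectic form one obtains a flat K\"ahler structure on $\R^k$. On $N$ the restriction is a compact almost K\"ahler manifold acted on transitively by the compact semisimple Lie group $\U_{\mathrm{ss}}$, so \cite[Thm.~9.4]{WoGrII} directly yields integrability of $\tilde J|_N$. Hence $\tilde J$ is integrable on $\tilde M$, and so is $J$ on $M$; combined with $d\omega = 0$, this proves that $(M, g, J)$ is K\"ahler.

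The main obstacle lies in the $\tilde J$-invariance of the de Rham factors: since $\nabla \tilde J \ne 0$ in the strictly almost K\"ahler regime, this compatibility is not automatic from holonomy considerations, because the de Rham splitting is defined purely via the Levi-Civita holonomy representation. I would extract it from the $\tilde J$-invariance under the full product group $\R^k \times \U_{\mathrm{ss}}$, which in particular forces $\tilde J$ to be parallel for the flat connection on the Euclidean factor, together with holonomy and dimension constraints on the non-flat factors of $N$ that forbid any mixing between $T\R^k$ and $TN$.
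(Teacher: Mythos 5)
Your overall strategy---split off the flat/abelian part and invoke \cite[Thm.~9.4]{WoGrII} on the remaining semisimple-homogeneous factor---is the same as the paper's, but the step you yourself flag as ``the main obstacle'' is exactly the point where your argument has a genuine gap, and the mechanism you propose to close it does not work. The de Rham decomposition of $\tilde M$ is defined by the Levi-Civita holonomy representation, and since $\nabla J\neq 0$ in the strictly almost K\"ahler case, no amount of ``holonomy and dimension constraints'' will control $J$: holonomy only constrains parallel tensors, and $J$ is not parallel. What actually forces $J$ to preserve the splitting is a representation-theoretic fact about the \emph{isotropy} action, not the holonomy action: writing $M=M_1\times \mathrm{Z}$ with $M_1=\G/\K$, $\G$ semisimple and $\K$ the centralizer of a torus, the tangent space to $\mathrm{Z}$ is a trivial $\K$-module, while $\gm_1$ has no nonzero $\K$-fixed vectors (the maximal torus inside $\K$ acts on $\gm_1^{\scriptscriptstyle\mathbb{C}}$ with nonzero weights). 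By Schur's lemma any $\K$-invariant endomorphism of $\gm_1\oplus\gm_2$ then preserves the two summands, so the invariant $J$ splits. This is the one-line argument the paper uses (``the tangent spaces to $M_1$ and $\mathrm{Z}$ are inequivalent as $\K$-modules''), and it is the ingredient your write-up is missing.

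A secondary difference: the paper does not pass to the Riemannian universal cover at all. It uses the Zwart--Boothby structure theorem \cite{ZwBo} for compact homogeneous \emph{symplectic} manifolds to split $M$ itself as a symplectic product $M_1\times\mathrm{Z}$ with $\mathrm{Z}$ a torus, which simultaneously hands you the group splitting $\U\sim\G\times\mathrm{Z}$ and the fact that $\mathrm{Z}$ acts on the torus factor by translations---a point you assert for the Euclidean de Rham factor but do not justify (a transitive isometric action on $\R^k$ need not restrict to translations, and the compatibility of the lifted group with the de Rham factors is itself nontrivial). If you want to salvage your route, replace the de Rham decomposition by the symplectic splitting of \cite{ZwBo} and insert the inequivalent-isotropy-modules argument; after that, quoting \cite[Thm.~9.4]{WoGrII} on the semisimple factor (which the paper in fact reproves via the root-space computation showing $\alpha\in R_{\gm}^{\scriptscriptstyle+}$ iff $i\alpha(z)>0$) finishes the proof as you intend.
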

\begin{proof} 
Let $\U$ be a compact connected Lie group acting transitively and almost effectively by automorphisms on $(M,g,J)$, and  let $\o$ be the fundamental form.  
The group $\U$ is (locally) isomorphic to the product of its semisimple part $\G$ and a torus ${\mathrm Z}$, 
and the manifold $M$ splits as a symplectic product $M_1\times {\mathrm Z}$, where $M_1=\G/\K$ and $\K$ is the 
centralizer of a torus in $\G$ (see \cite[$\S$5]{ZwBo}). 
The splitting is also holomorphically isometric, since the tangent spaces to $M_1$ and ${\mathrm Z}$ are inequivalent as $\K$-modules. \par 
Keeping the same notations as in $\S$\ref{SCH}, we recall that when $\gg$ is a compact semisimple Lie algebra, 
then $\overline{E}_\a \coloneqq \tau(E_\a) = - E_{-\a}$, for every root $\a\in R$. 
 
Now, for every $\a\in R_\gm$, we have $E_\a - E_{-\a}\in \gm$ and 
\[
0 <  g(E_\a - E_{-\a}, E_\a - E_{-\a}) = -2 g(E_\a,E_{-\a}). 
\]
Therefore, when $\a\in R_\gm^{\sst+}$
\[
0 < -2 g(E_\a,E_{-\a}) = -2 \o(E_\a,JE_{-\a}) = 2i \o(E_\a,E_{-\a}) = 2i \a(z).
\]
This means that $\a\in R_\gm^{\sst+}$ if and only if $i\a(z) > 0$. 
Hence,  we have that  $(R_\gm^{\sst+}+R_\gm^{\sst+})\cap R \subseteq R_\gm^{\sst+}$.  
This last condition is equivalent to the integrability of $J$ (see e.g.~\cite[(3.49)]{BFR}).
\end{proof} 

An immediate consequence of the previous proposition is the following. 
\begin{corollary}\label{inexistencecpt} 
Let $(M,\omega,\psip)$ be a compact homogeneous SHF 6-manifold. Then, the $\SU(3)$-structure $(\omega,\psip)$ is torsion-free and $M$ is a flat torus.
\end{corollary}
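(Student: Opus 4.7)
The plan is to chain Proposition \ref{cptaK} with a classical rigidity theorem for Ricci-flat homogeneous metrics. The almost Hermitian pair $(g,J)$ underlying a SHF structure is automatically almost K\"ahler (since $d\omega=0$) and is canonically recovered from $(\omega,\psip)$, hence $\G$-invariant. Compactness of $M$ therefore lets me invoke Proposition \ref{cptaK} directly to conclude that $J$ is integrable, so $(M,g,J)$ is in fact K\"ahler.

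The next step is to trade integrability of $J$ for torsion-freeness of the $\SU(3)$-structure. I would exploit the observation recorded just after equation \eqref{SHFeqn}, namely that whenever the intrinsic torsion form $\Wd$ of a SHF structure is non-zero, the associated almost complex structure $J$ is non-integrable. Taking the contrapositive, the integrability just established forces $\Wd\equiv 0$, i.e., $(\omega,\psip)$ is torsion-free. By the general discussion in the preliminaries this in turn gives that $g$ is Ricci-flat with $\mathrm{Hol}(g)\subseteq \SU(3)$.

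From here the argument is purely Riemannian. By the classical Alekseevsky--Kimel'fel'd theorem, every homogeneous Ricci-flat Riemannian manifold is flat; applied to $(M,g)$ it yields that $g$ is flat. Combining compactness, homogeneity and flatness — via the Bieberbach theorems and the fact that a transitive isometry group of a compact flat manifold must act on the universal cover $\R^6$ through pure translations — forces $M$ to be isometric to a flat $6$-torus, which is the stated conclusion.

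I expect no genuine obstacle in this argument: the only external input is the Alekseevsky--Kimel'fel'd rigidity, while every other step is either a direct appeal to Proposition \ref{cptaK} or a consequence of facts already recorded in the paper's treatment of torsion-free SHF structures. The most delicate point, if any, is the passage from ``integrable $J$'' to ``torsion-free $\SU(3)$-structure'', which however has already been built into the characterisation of $\Wd$ recalled above.
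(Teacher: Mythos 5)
Your proposal is correct and follows essentially the same route as the paper: apply Proposition \ref{cptaK} to get integrability of $J$, deduce torsion-freeness of the SHF structure (via the fact that a non-zero intrinsic torsion form $\Wd$ forces $J$ to be non-integrable), and conclude flatness from Ricci-flatness by the Alekseevsky--Kimel'fel'd theorem \cite{AK}. The only difference is that you spell out the final identification of $M$ with a flat torus via Bieberbach, which the paper leaves implicit.
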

\begin{proof}  Consider the almost K\"ahler structure $(g,J)$ underlying $(\omega,\psip)$. 
By Proposition \ref{cptaK}, the almost complex structure $J$ is integrable. Then, the $\SU(3)$-structure is torsion-free. 
In particular, the metric $g$ is Ricci-flat, and thus flat by \cite{AK}. 
\end{proof}

\subsection{Noncompact homogeneous SHF 6-manifolds} 
Motivated by the result of $\S$\ref{nocptSHF}, we now look for examples of noncompact homogeneous SHF 6-manifolds. 
In particular, assuming that the transitive group of automorphisms $\G$ is semisimple, we shall prove the following classification result.
\begin{theorem}\label{classThm}
Let $(M,\omega,\psip)$ be a noncompact $\G$-homogeneous SHF 6-manifold, and assume that the group $\G$ is semisimple. 
Then, one of the following situations occurs:
\begin{enumerate}[1)]
\item $M = \SU(2,1)/{\mathrm T}^2$, and there exists a 1-parameter family of pairwise non-homothetic and non-isomorphic invariant SHF structures;
\item $M = \SO(4,1)/\U(2)$, and there exists a unique invariant SHF structure up to homothety. 
\end{enumerate} 
Moreover, in both cases the Riemannian metric induced by the SHF structure has $J$-Hermitian Ricci tensor. 
\end{theorem}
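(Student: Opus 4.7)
The plan has three stages: (i) reduce the classification to a finite list of equal-rank pairs $(\gg,\gk)$ with $\dim\gg/\gk=6$ and $\gk$ compact, (ii) eliminate every case except the two twistor pairs by a rigidity argument for invariant almost complex structures combined with \cite[Cor.~4.1]{BedVez}, and (iii) parametrize the invariant SHF structures on the survivors and deduce $J$-Hermiticity of the Ricci tensor via Proposition \ref{JRic} and representation theory.

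First I would apply the setup of \S\ref{SCH} to the $\G$-invariant symplectic form $\omega$, obtaining $\K=C_\G(z)$ compact and connected and $\gh=\gt^{\sst\bC}$ a Cartan subalgebra of $\gg^{\sst\bC}$. This forces $\mathrm{rank}\,\gg=\mathrm{rank}\,\gk$ and $|R_\gm|=6$. Running through the noncompact simple real Lie algebras with a compact equal-rank subalgebra of codimension $6$, and through the semisimple products (after discarding any compact ideals by almost effectiveness), yields a short list. Besides the two twistor pairs $(\gsu(2,1),\gt^2)$ and $(\gso(4,1),\gu(2))$, every other candidate is either a noncompact Hermitian symmetric space or a Riemannian product of such, namely $\C\H^3$, $\Sp(2,\R)/\U(2)$, $\SO(3,2)/(\SO(3)\times\SO(2))$, $\R\H^2\times\C\H^2$ and $\R\H^2\times\R\H^2\times\R\H^2$, or the product $\R\H^4\times\R\H^2=(\SO(4,1)\times\SL(2,\R))/(\SO(4)\times\SO(2))$.

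On $\R\H^4\times\R\H^2$ the isotropy factor $\SO(4)$ admits no invariant complex structure on the tangent $\R^4$, so no invariant almost complex structure exists on $\gm$ at all, and the case is immediately excluded. On each Hermitian symmetric candidate, the isotropy acts complex-irreducibly on each symmetric factor, so the only $\Ad(\K)$-invariant almost complex structures are the integrable K\"ahler ones (up to sign on each factor). For such a structure, closure of $\psip$ makes $\Psi=\psip+i\psim$ a parallel holomorphic volume form and forces the K\"ahler metric to be Ricci-flat. Since each of these invariant K\"ahler metrics is Einstein with nonzero scalar curvature, \cite[Cor.~4.1]{BedVez} provides the required contradiction, leaving only the twistor pairs.

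On each surviving pair I would enumerate the $\Ad(\K)$-invariant $2$- and $3$-forms by weights: the invariant $2$-forms are linear combinations of $E_\a\W E_{-\a}$ with $\a\in R_\gm^{\sst+}$, and the invariant $3$-forms come from triples $\{\a,\b,\gamma\}\subset R_\gm$ with $\a+\b+\gamma=0$, subject to a further $\K$-invariance constraint when $\gk\ne\gt$. Closure of $\omega$ is automatic from $D=\ad(z)$, while $d\psip=0$ becomes a linear system in the structure constants $N_{\a,\b}$; together with \eqref{compcond}, \eqref{normalization} and the positive-definiteness of $g=\omega(\cdot,J\cdot)$, this singles out, up to sign and homothety, the claimed one-parameter family on $\SU(2,1)/\T^2$ and a unique structure on $\SO(4,1)/\U(2)$. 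Non-homothety and non-isomorphism within the $\SU(2,1)$-family are detected by a scale-invariant quantity such as the ratio of eigenvalues of $\ad(z)^2$ on $\gm$ measured against $g$. Finally, Proposition \ref{JRic} reduces $J$-Hermiticity of the Ricci tensor to the vanishing of the $\Odp$-component $\nu$ of $d\Wd$; by $\G$-invariance $\nu$ corresponds via the map $\pi_{\sst-}$ of \cite{BedVez} to a $\K$-invariant element of $\mathcal{S}^2_{\sst-}(\gm)$, and I would finish by showing $\mathcal{S}^2_{\sst-}(\gm)^{\K}=\{0\}$, which is immediate by weight analysis for $\K=\T^2$ and a short branching computation for the standard $\U(2)$-representation on $\gm$ for $\K=\U(2)$. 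I expect the main obstacle to be the explicit resolution of $d\psip=0$ together with the positivity constraint on $\SU(2,1)/\T^2$, where a careful choice of the positive system $R_\gm^{\sst+}$ and a delicate interplay among the structure constants are needed to produce precisely the one-parameter family and no extraneous invariant SHF structures.
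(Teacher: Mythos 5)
Your strategy is sound, and its computational core (stage (iii)) coincides with what the paper does: weight-by-weight determination of the invariant $2$- and $3$-forms, imposition of closure, compatibility, normalization and positivity, and an inequivalent-isotropy-submodule argument for the $J$-Hermiticity of the Ricci tensor. The difference lies in the elimination stage. The paper first proves that $\G$ must be \emph{simple} by one blanket argument (Lemma \ref{simplelemma}): any non-simple $\gg$ forces a splitting $\gm=\gm'\oplus\gm''$ into a $2$- and a $4$-dimensional summand on which the isotropy has no nonzero fixed vectors, whence $[\Lambda^3(\gm'\oplus\gm'')]^{\K}=\{0\}$ and no invariant $3$-form exists at all; no case-by-case discussion of products is needed. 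It then passes to a maximal compact subgroup $\mathrm{L}\supseteq\K$, shows $\K\subsetneq\mathrm{L}$ (your ``integrable $\Rightarrow$ torsion-free $\Rightarrow$ Ricci-flat, contradiction'' argument, applied once to rule out the Hermitian symmetric case), deduces $\dim\G/\mathrm{L}=4$, and reads the two surviving pairs off the tables in \cite{Hel}. Your direct enumeration of equal-rank pairs followed by per-case integrability arguments also works, but is longer and, as written, has two genuine soft spots.

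First, ``discarding any compact ideals by almost effectiveness'' is not legitimate: almost effectiveness only excludes ideals of $\gg$ contained in $\gk$. A compact simple ideal not contained in $\gk$ acts nontrivially and contributes a compact factor to $M$; configurations such as $S^2\times\C\H^2$, $\C P^2\times\R\H^2$, $S^2\times S^2\times\R\H^2$ and $S^2\times\R\H^2\times\R\H^2$ satisfy every hypothesis of the theorem ($M$ noncompact, $\G$ semisimple) and are absent from your list. They are killed by the very arguments you already employ (or, more efficiently, by the absence of invariant $3$-forms on any $2+4$ or $2+2+2$ product), but your enumeration must include them to be a classification. Second, the invariant you propose for distinguishing the members of the $\SU(2,1)/{\mathrm T}^2$ family, namely eigenvalue ratios of $\ad(z)^2$ measured against $g$, is a priori only an invariant of the $\G$-\emph{equivariant} structure: an isomorphism of SHF structures is an arbitrary diffeomorphism matching $(\omega,\psip)$, and you would need a separate argument (e.g.\ that $\G$ is normal in the full automorphism group) to conclude it preserves data built from $B$ and $z$. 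The paper avoids this by computing $\Scal(g)=-24\,N_{\a,\b}^2(a^2+ab+b^2)$, a manifest diffeomorphism invariant of the induced metric, and checking that it is injective on the volume-normalized parameter set $\VSHF$; I recommend substituting this (or the spectrum of $\Ric(g)$ relative to $g$) for your proposed quantity.
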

\begin{remark}
Observe that the two examples are precisely the twistor spaces of $\mathbb{C H}^2$ and $\mathbb{R H}^4$. 
The existence of a SHF structure on the latter was already known (see e.g.~\cite[$\S$1.2]{Xu}). 
\end{remark}

For the sake of clarity, we divide the proof of Theorem \ref{classThm} into various steps. We begin showing a preliminary lemma. 
\begin{lemma}\label{simplelemma}
Let $(\G/\K,\o,\psip)$ be a homogeneous SHF 6-manifold with $\G$ semisimple. Then, $\G$ is simple. 
\end{lemma}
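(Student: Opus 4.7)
The plan is to argue by contradiction. I will assume $\gg$ is not simple and show that the resulting constraints force the invariant stable $3$-form $\psip$ to vanish.

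First, I decompose $\gg=\gg_1\oplus\cdots\oplus\gg_s$ into simple ideals with $s\geq 2$, and accordingly write $z=z_1+\cdots+z_s$ with $z_i\in\gg_i$. Since $[\gg_i,\gg_j]=0$ for $i\neq j$, the identity $\gk=Z_\gg(z)$ yields $\gk=\gk_1\oplus\cdots\oplus\gk_s$ with $\gk_i:=Z_{\gg_i}(z_i)$; taking the $B$-orthogonal reductive complement gives $\gm=\gm_1\oplus\cdots\oplus\gm_s$ with $\gm_i\subseteq\gg_i$. The commutation of distinct ideals forces $\omega(\gm_i,\gm_j)=0$ for $i\neq j$, and the non-degeneracy argument used for $\omega$ on $\gm$ applies factorwise, so each restriction $\omega_i:=\omega|_{\gm_i}$ is symplectic and $\dim\gm_i$ is even. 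Almost effectiveness of the $\G$-action precludes $\gm_i=0$ (which would place $\gg_i$ in the kernel), so $\dim\gm_i\geq 2$; summing to $6$ leaves only two possibilities: $s=2$ with $\{\dim\gm_1,\dim\gm_2\}=\{2,4\}$, or $s=3$ with $\dim\gm_1=\dim\gm_2=\dim\gm_3=2$.

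The key input next is that $(\gm_i)^{\K_i}=0$ for each $i$. To see this I apply the Cartan-subalgebra argument from $\S$\ref{SCH} to each pair $(\gg_i,z_i)$ separately: it produces a maximal torus $\gt_i\subseteq\gk_i$ whose complexification is a Cartan subalgebra of $\gg_i^{\bC}$, so $Z_{\gg_i}(\gt_i)=\gt_i\subseteq\gk_i$, and any $\K_i$-fixed vector in $\gm_i$ must lie in $\gm_i\cap\gk_i=0$. Averaging a positive-definite form under the compact $\K_i$ identifies $\gm_i\cong\gm_i^*$ as $\K_i$-modules, giving $(\gm_i^*)^{\K_i}=0$; moreover, in the $4$-dimensional case, contraction against the $\K_i$-invariant volume form of $\gm_i$ gives a $\K_i$-equivariant isomorphism $\Lambda^3\gm_i^*\cong\gm_i^*$, so $(\Lambda^3\gm_i^*)^{\K_i}=0$ as well. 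Now decompose
\[
\Lambda^3\gm^*=\bigoplus_{p_1+\cdots+p_s=3}\bigotimes_i\Lambda^{p_i}\gm_i^*.
\]
In the $(2,4)$ configuration the surviving summands are indexed by $(p_1,p_2)\in\{(0,3),(1,2),(2,1)\}$, each of which contains a vanishing $\K_i$-invariant factor (respectively $(\Lambda^3\gm_2^*)^{\K_2}$, $(\gm_1^*)^{\K_1}$, $(\gm_2^*)^{\K_2}$). In the $(2,2,2)$ configuration, every partition of $3$ into parts from $\{0,1,2\}$ contains at least one part equal to $1$, again giving a vanishing factor. Therefore $(\Lambda^3\gm^*)^\K=0$, contradicting the existence of the stable invariant form $\psip$, and so $s=1$.

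The only mildly delicate verification is the $\K_i$-equivariance of the Hodge-style duality on the $4$-dimensional factor (or equivalently, that a $\K_i$-invariant volume exists, which follows from connectedness of $\K_i$ once one has compactness). Everything else reduces to direct-sum bookkeeping combined with the Cartan-subalgebra argument already recorded in $\S$\ref{SCH}.
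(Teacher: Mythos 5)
Your proof is correct and follows essentially the same route as the paper's: split $\gg$ into ideals, observe that $\gk$, $\gm$ and $\o$ split accordingly into lower-dimensional symplectic pieces, and then show $(\Lambda^3\gm^*)^{\K}=\{0\}$ summand by summand using the absence of nonzero isotropy-fixed vectors. You merely make explicit the details that the paper compresses into ``a simple computation shows'' --- namely the volume-form duality $\Lambda^3\gm_i^*\cong\gm_i^*$ needed for the $(0,3)$ summand on the $4$-dimensional factor, and the bookkeeping for the $(2,2,2)$ configuration, which the paper subsumes by regrouping into two ideals of dimensions $2$ and $4$.
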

\begin{proof}
Suppose that $\G$ is not simple. Then, $\gg$ splits as the sum of two non-trivial ideals $\gg=\gg'\oplus\gg''$. 
Since $\gk$ is the centralizer of an element $z\in\gg$, it splits as $\gk = (\gk\cap\gg')\oplus(\gk\cap\gg'')$, and the manifold $\G/\K$ is   
the product of homogeneous symplectic manifolds of lower dimension, say $\G/\K=\G'/\K' \times \G''/\K''$. 
Without loss of generality, we may assume that $\dim(\G'/\K')=2$ and $\dim(\G''/\K'')=4$. 
The tangent space $\gm$ splits as $\gm'\oplus\gm''$, and a simple computation shows that $[\Lambda^3(\gm'\oplus\gm'')]^\K = \{0\}$, since the isotropy representations of 
$\K'$ and $\K''$ have no non-trivial fixed vectors. 
\end{proof}

By the previous lemma, we can focus on the case when the Lie group $\G$ is simple and noncompact. 
Let ${\mathrm L}\subset\G$ be a maximal compact subgroup containing $\K$. Then  $(\G,{\mathrm L})$ is a symmetric pair, and $\K$ is strictly contained in ${\mathrm L}$. 
Indeed, if ${\mathrm L} = \K$, then $(\G,{\mathrm L})$ would be a Hermitian symmetric pair, and every invariant almost complex structure on $\G/{\mathrm L}$ would be integrable. 
In particular, every invariant SHF structure on $\G/{\mathrm L}$ would be torsion-free, hence flat. This contradicts the simplicity of $\G$. 
Moreover, the space ${\mathrm L}/\K$ is symplectic, as $\K$ is the centralizer of a torus in ${\mathrm L}$. Consequently, as $\dim(\G) \geq 6$, we have $\dim(\G/{\mathrm L})=4$.

Therefore, we have to consider the list of symmetric pairs $(\gg,\gl)$ of noncompact type, where $\gg$ is simple, $\gl$ is of maximal rank in $\gg$, 
and $\dim(\gg)-\dim(\gl)=4$. After an inspection of all potential cases in \cite[Ch.~X, $\S$6]{Hel}, 
we are left with two possibilities, which are summarized in Table \ref{ncptSHF}. 

\begin{table}[ht]
\centering
\renewcommand\arraystretch{1.1}
\begin{tabular}{|c|c|c|}
\hline
$\G$				& 	${\mathrm L}$					&	$\K$					 	\\ \hline \hline
$\SU(2,1)$		&	${\mathrm S}(\U(2)\times\U(1))$	&	${\mathrm T}^2$			\\ \hline 
$\SO(4,1)$		&	$\SO(4)$						&	$\U(2)$					\\ \hline    
\end{tabular}
\vspace{0.1cm}
\caption{}\label{ncptSHF}
\end{table}
\renewcommand\arraystretch{1}

We now deal with the two cases separately. \par
\medskip \noindent
{\bf1)} {\boldmath$M  = \SU(2,1)/{\mathrm T}^2$}\\ 
Here $\gg^{\sst{\bC}} = \frsl(3,\bC)$, and we may think of $\gt$ as the abelian subalgebra 
\[
\gt=\left\{\diag(ia,ib,-ia-ib)\in \gg^{\sst{\bC}}\st a,b\in\R\right\}.
\] 
The root system $R$ relative to the Cartan subalgebra $\gt^{\sst{\bC}}$ is given by $\{\pm\a,\pm\b,\pm(\a+\b)\}$. 
Without loss of generality, we assume that $\pm\a$ are the compact roots, i.e., $\gl^{\sst{\bC}} = \gt^{\sst{\bC}} \oplus \gg_\a\oplus \gg_{-\a}$. 
Notice that $\overline{E}_\g=-E_{-\g}$ for a compact root $\g\in R$, while $\overline{E}_\g=E_{-\g}$ when $\g$ is noncompact. 
We can then define the vectors 
\begin{equation}\label{realvect}
v_\g \coloneqq E_\g +  \overline{E}_{\g},\quad w_\g \coloneqq i\left(E_\g - \overline{E}_{\g}\right),\quad \g\in\{\a,\b,\a+\b\}, 
\end{equation}
so that if $\gm_\g \coloneqq \mathrm{span}_\R(v_\g,w_\g)$, we have $\gm = \gm_\a \oplus\gm_\b\oplus\gm_{\a+\b}$.

An invariant symplectic form $\o$ is determined by an element $z\in \gt\smallsetminus\{0\}$, 
and for every root $\g\in R$ the only nonzero components of $\o$ on $\gm^{\sst \C}$ are given by 
\[
\o(E_\g,E_{-\g}) = B([z,E_\g],E_{-\g}) = \g(z).
\]
If we fix $z_{a,b}\coloneqq\diag(ia,ib,-i(a+b))\in\gt$, we have
\[
\renewcommand\arraystretch{1.2}
\begin{array}{rcl}
\o(E_\a,E_{-\a}) 		&=& \a(z_{a,b})~=~i(a-b),\\
\o(E_{\b},E_{-\b}) 		&=& \b(z_{a,b})~=~i(a+2b),\\
\o(E_{\a+\b},E_{-\a-\b}) 	&=& (\a+\b)(z_{a,b})~=~i(2a+b).
\end{array}
\renewcommand\arraystretch{1}
\]

Let $\{E^\g\}_{\g\in R}$ denote the basis of $({\gm^{\sst{\bC}}})^*$ which is dual to the basis given by the root vectors $\{E_\g\}_{\g\in R}$.
Then, we can write
\begin{equation}\label{omegaab}
\omega = i(a-b)\,E^\a \W E^{-\a} + i(a+2b)\,E^\b \W E^{-\b} + i(2a+b)\,E^{\a+\b} \W E^{-\a-\b},
\end{equation}
and the volume form induced by $\omega$ on $\gm^{\sst \C}$ is 
\begin{equation}\label{volomg}
\frac{\omega^3}{6} = i(b-a)(a+2b)(2a+b) E^\a \W E^{-\a} \W E^\b \W E^{-\b} \W E^{\a+\b} \W E^{-\a-\b}. 
\end{equation}
We introduce the real volume form
\begin{equation}\label{Omega}
\Omega\coloneqq iE^\a \W E^{-\a} \W E^\b \W E^{-\b} \W E^{\a+\b} \W E^{-\a-\b}\in\Lambda^6((\gm^{\sst \C})^*).
\end{equation} 
Observe that $\omega^3$ and $\Omega$ define the same orientation if and only if $(b-a)(a+2b)(2a+b)>0$. 

In the next lemma, we describe closed invariant $3$-forms on $M.$
\begin{lemma}\label{Lpsi} Let $\psi\in \Lambda^3(\gm^*)$ be a nonzero $\Ad({\mathrm T}^2)$-invariant 3-form whose corresponding form on $M$ is closed. 
Then, the 3-form $\psi$ on $\gm^{\sst \C}$ can be written as
\begin{equation}\label{psi}
\psi = i q \left( E^{\a}\wedge E^{\b}\wedge E^{-\a-\b} + E^{-\a}\wedge E^{-\b}\wedge E^{\a+\b}\right),
\end{equation}
for a suitable $q\in \R\smallsetminus\{0\}$.
\end{lemma}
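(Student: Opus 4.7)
The plan is to use three ingredients in sequence: $\Ad({\mathrm T}^2)$-invariance, reality of $\psi$, and closedness via the Chevalley--Eilenberg formula.

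First I would observe that, since $\K = {\mathrm T}^2$ has Lie algebra $\gk = \gt$ equal to the Cartan subalgebra itself, we have $\gm^{\sst{\bC}} = \bigoplus_{\g\in R}\gg_\g$, and the weight of the basis monomial $E^{\g_1}\wedge E^{\g_2}\wedge E^{\g_3}$ under $\Ad({\mathrm T}^2)$ is $\g_1+\g_2+\g_3$. In the $A_2$ root system $R = \{\pm\a,\pm\b,\pm(\a+\b)\}$, the only triples of distinct roots with vanishing sum are $\{\a,\b,-\a-\b\}$ and its negative, so invariance forces
\[
\psi = \lambda\, E^\a\wedge E^\b\wedge E^{-\a-\b} + \mu\, E^{-\a}\wedge E^{-\b}\wedge E^{\a+\b}
\]
for some $\lambda,\mu\in\bC$.

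Next I would impose reality: since $\pm\a$ are the compact roots of $(\gg,\gl)$ while $\pm\b,\pm(\a+\b)$ are noncompact, the conjugation $\tau$ of the real form $\gg$ satisfies $\tau(E_\a)=-E_{-\a}$, $\tau(E_\b)=E_{-\b}$, $\tau(E_{\a+\b})=E_{-\a-\b}$. Evaluating $\psi$ on $(E_\a,E_\b,E_{-\a-\b})$ and on its image under $\tau$, and using $\psi(\bar v_1,\bar v_2,\bar v_3)=\overline{\psi(v_1,v_2,v_3)}$, forces $\mu = -\bar\lambda$.

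Finally I would impose $d\psi = 0$ using the invariant-form formula
\[
(d\psi)(X_1,\ldots,X_4) = \sum_{i<j}(-1)^{i+j}\,\psi\!\left([X_i,X_j]_\gm,\, X_1,\ldots,\widehat{X_i},\ldots,\widehat{X_j},\ldots,X_4\right),
\]
evaluated on the three $\Ad({\mathrm T}^2)$-invariant quadruples of distinct roots with vanishing sum: $\{\a,-\a,\b,-\b\}$, $\{\a,-\a,\a+\b,-\a-\b\}$ and $\{\b,-\b,\a+\b,-\a-\b\}$. In each case the Cartan brackets $[E_\g,E_{-\g}]\in\gh\subset\gk^{\sst{\bC}}$ drop out, and the remaining terms involve the structure constants $N_{\cdot,\cdot}$. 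Using the Chevalley-basis identity $N_{-\a,-\b}=-N_{\a,\b}$, each of the three evaluations collapses to the same equation, namely $N_{\a,\b}(\lambda+\bar\lambda)=0$. This forces $\re\lambda=0$, so $\lambda=iq$ with $q\in\R$; the reality relation then gives $\mu=iq=\lambda$, and $q\neq 0$ by the nontriviality of $\psi$.

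The main technical obstacle will be the sign bookkeeping in the last step: tracking carefully the signs coming from permutations of the root vectors when evaluating $\psi$ on the various triples, and those coming from the structure constants of $\gs\gl(3,\bC)$. I expect all three invariant quadruples to yield the single condition $\re\lambda=0$, with no further constraints appearing.
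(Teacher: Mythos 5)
Your proposal is correct and follows essentially the same route as the paper: the weight argument under $\ad(\gt^{\sst\C})$ reduces $\psi$ to the two monomials $E^\a\wedge E^\b\wedge E^{-\a-\b}$ and $E^{-\a}\wedge E^{-\b}\wedge E^{\a+\b}$, the reality condition via $\tau$ gives $\mu=-\bar\lambda$ (the paper's $p+iq$ versus $-p+iq$), and the Koszul formula on the quadruples $(E_{\g_1},E_{-\g_1},E_{\g_2},E_{-\g_2})$ together with $N_{-\a,-\b}=-N_{\a,\b}$ forces $\re\lambda=0$. The paper's computation is identical in substance, just written with real coefficients $p,q$ from the start.
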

\begin{proof}
The invariance of $\psi$ under the adjoint action of the Cartan subalgebra implies that  
\[
(\g_1 + \g_2 + \g_3)(H)\, \psi(E_{\g_1},E_{\g_2},E_{\g_3}) = 0,
\]
for all $\g_1,\g_2,\g_3\in R$ and for all $H\in \gt$. Thus, $\psi$ is completely determined by the values 
\begin{equation}\label{psi1} 
\psi(E_\a,E_\b,E_{-\a-\b}) \coloneqq p+iq,
\end{equation}
and 
\begin{equation}\label{psi2}
\psi(E_{-\a},E_{-\b},E_{\a+\b}) = - \overline{\psi(E_\a,E_\b,E_{-\a-\b})} = -p + iq,
\end{equation}
for suitable $p,q\in\R$. 

Using the Koszul formula for the differential of invariant forms on $\gm^{\sst \C}$, we have
\[
d\psi(X_0,X_1,X_2, X_3) = \sum_{i<j} (-1)^{i+j}\ \psi\left([X_i,X_j]_{\gm^{\sst \C}},X_k,X_l\right),\quad X_0,\ldots, X_3\in \gm^{\sst\C}, 
\]
where $\{i,j\} \cup\{k,l\} = \{0,1,2,3\}$ for each $0\leq i<j\leq3$ and $k<l$. 

By the $\ad(\gt^{\sst{\bC}})$-invariance, we only need to check the values $d\psi\left(E_{\g_1},E_{-\g_1},E_{\g_2},E_{-\g_2}\right)$, with $\g_1,\g_2\in R$. 
From \eqref{psi1}, \eqref{psi2}, and the identity $N_{-\a,-\b} = - N_{\a,\b}$ (cf.~e.g.~\cite[p.176]{Hel}), we get
\begin{eqnarray*}
d\psi(E_\a,E_{-\a},E_\b,E_{-\b}) 	&=&	\psi\left([E_\a,E_\b],E_{-\a},E_{-\b}\right) + \psi\left([E_{-\a},E_{-\b}], E_\a,E_\b\right) \\
							&=&	 N_{\a,\b}\, \psi(E_{-\a},E_{-\b},E_{\a+\b}) + N_{-\a,-\b}\, \psi(E_\a,E_\b,E_{-\a-\b})\\
							&=& 	N_{\a,\b}\, [-p+iq -(p+iq)] \\
							&=&	 -2p N_{\a,\b}. 
\end{eqnarray*}
Similarly, we obtain
\[
d\psip(E_\a,E_{-\a},E_{\a+\b},E_{-\a-\b}) = 2pN_{\a,\b},\quad d\psip(E_\b,E_{-\b},E_{\a+\b},E_{-\a-\b}) = 2pN_{\a,\b}.
\]
Hence, the condition $d\psi=0$ is equivalent to $p=0$. 
\end{proof}
Throughout the following, we will consider a closed invariant $3$-form $\psip$ as in Lemma \ref{Lpsi}. 
The next result proves the compatibility condition \eqref{compcond} and the stability of $\psip$. 
\begin{lemma}\label{Lstab} 
Let $\psi$ be a closed invariant $3$-form on $\gm^{\sst \C}$ as in \eqref{psi}. 
Then, $\psi$ is compatible with every invariant symplectic form $\o$. 
Moreover, $\psi$ is always stable, and it induces an invariant almost complex structure $J\in\End(\gm^{\sst \C})$ such that 
\begin{equation}\label{J} 
J(E_\a) =- i\d_{a,b}\, E_\a,\quad J(E_{\b}) = -i\d_{a,b}\, E_{\b},\quad J(E_{\a+\b}) = i\d_{a,b}\, E_{\a+\b},
\end{equation}
where $\d_{a,b}$ is the sign of $(b-a)(a+2b)(2a+b)$. 
\end{lemma}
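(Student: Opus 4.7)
The plan is to verify the three claims separately, working in the complexification $\gm^{\sst\bC}$ in terms of the root vector basis.

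First, for the compatibility condition $\omega\wedge\psi=0$, observe from \eqref{omegaab} that $\omega$ is a linear combination of the three $2$-forms $E^\gamma\wedge E^{-\gamma}$ with $\gamma\in\{\a,\b,\a+\b\}$, while each of the two summands of $\psi$ in \eqref{psi} already contains exactly one covector from each pair $\{E^\gamma,E^{-\gamma}\}$. Every term of $\omega\wedge\psi$ therefore contains a repeated basis covector, and so vanishes.

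Next, to verify stability and compute $J$, I would evaluate the endomorphism $S_\psi$ defined by \eqref{P} with respect to the auxiliary volume form $\Omega$ of \eqref{Omega}. For each $\gamma\in R_\gm$, a direct calculation shows that $\iota_{E_\gamma}\psi$ equals $iq$ times a single decomposable $2$-form $E^{\gamma_1}\wedge E^{\gamma_2}$ with $\gamma_1+\gamma_2=-\gamma$; hence $\iota_{E_\gamma}\psi\wedge\psi$ is a $5$-form missing precisely the covector $E^\gamma$. The defining identity \eqref{P} then forces $S_\psi$ to be diagonal in the root vector basis. Carrying out the permutation-sign bookkeeping once for each root, one finds
\[
S_\psi(E_\gamma)=-iq^2\,E_\gamma \;\text{ for }\; \gamma\in\{\a,\b,-\a-\b\}, \qquad S_\psi(E_\gamma)=iq^2\,E_\gamma \;\text{ for }\; \gamma\in\{-\a,-\b,\a+\b\}.
\]
In particular $S_\psi^2=-q^4\,\Id$, so $P(\psi)=-q^4<0$ and $\psi$ is stable.

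Finally, the almost complex structure $J$ of the $\SU(3)$-structure is defined using the volume form $\omega^3/6$ rather than $\Omega$. From \eqref{volomg} and \eqref{Omega} one reads $\omega^3/6=\lambda\,\Omega$ with $\lambda=(b-a)(a+2b)(2a+b)$; by the scaling rule in the remark after \eqref{Jpsi}, the $J$ induced by the chosen orientation differs from the $J_\psi$ obtained with respect to $\Omega$ by the factor $\sgn(\lambda)=\d_{a,b}$. Substituting the eigenvalues of $S_\psi$ into $J_\psi=\tfrac{1}{\sqrt{-P(\psi)}}\,S_\psi=\tfrac{1}{q^2}\,S_\psi$, multiplying by $\d_{a,b}$, and restricting to the three positive-root vectors yields exactly \eqref{J}. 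The only genuinely error-prone step is the permutation-sign bookkeeping for the $5$- and $6$-forms in the second paragraph; everything else is dictated by the root-space structure. A useful consistency check is that $S_\psi$ must commute with the antilinear involution $\tau$ of $\gg^{\sst\bC}$, forcing its eigenvalues on $E_\gamma$ and $E_{-\gamma}$ to be complex conjugates, in agreement with the sign pattern above.
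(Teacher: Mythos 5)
Your proposal is correct and follows essentially the same route as the paper: the paper also verifies stability by diagonalizing $S_\psi$ on the root vectors (it fixes the volume form $\d_{a,b}\,\Omega$ from the outset, obtaining eigenvalues $c_\a=c_\b=-c_{\a+\b}=-\d_{a,b}\,i\,q^2$, which agrees with your computation relative to $\Omega$ after the scaling rule), and then reads off $J$ from \eqref{Jpsi}. The only cosmetic difference is in the compatibility step, where the paper notes that $\o\wedge\psi=0$ because there are no nonzero invariant $1$-forms (hence no invariant $5$-forms) on $\gm$, whereas you check directly that every term of $\o\wedge\psi$ repeats a basis covector; both are valid one-line arguments.
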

\begin{proof} 
First, we observe that $\o\wedge\psi=0$, since there are no non-trivial invariant $5$-forms (or, equivalently, 1-forms) on $\gm$. 
 
In order to check the stability of $\psi$ and compute the almost complex structure induced by it and $\omega^3$,  
we complexify the relation \eqref{P} for the endomorphism $S_\psip$ and we fix the real volume form $\d_{a,b}\,\Omega$ (cf.~\eqref{volomg} and \eqref{Omega}). 
In this way, we obtain a map $S_\psip\in\End(\gm^{\sst\C})$ such that $S_\psip(\gm)\subseteq\gm$ and $S_\psip^2 = P(\psi) \mathrm{Id}$. 
A simple computation shows that for every $\g\in R$
\[
S_\psip(E_\g) = c_\g E_\g,
\]
where
\[
 c_\a = c_\b = - c_{\a+\b} = - \d_{a,b}\,i\,q^2,\quad\mbox{ and }\quad c_{-\g} = - c_\g.
 \]
Consequently, $P(\psi) = - q^4<0$. The expression of $J$ can be obtained from \eqref{Jpsi}. 
\end{proof}
Since $\omega\W\psip=0$, we can consider the $J$-invariant symmetric bilinear form $g \coloneqq \o(\cdot,J\cdot)$.  
It is positive definite if and only if
\begin{eqnarray*}
0	&<&	g(v_\a,v_\a) = 2\,\o(JE_\a,E_{-\a}) = -2i\d_{a,b}\,\o(E_\a,E_{-\a}) = 2\d_{a,b}\,(a-b),\\
0	&<&	g(v_\b,v_\b) = -2\,\o(JE_\b,E_{-\b}) = 2i\d_{a,b}\,\o(E_\b,E_{-\b}) = -2\d_{a,b}\,(a+2b),\\
0	&<&	g(v_{\a+\b},v_{\a+\b}) = -2\,\o(JE_{\a+\b},E_{-\a-\b}) = -2i\d_{a,b}\,\o(E_{\a+\b},E_{-\a-\b}) = 2\d_{a,b}\,(2a+b).
\end{eqnarray*}
Therefore, the set $\cQ$ of admissible real parameters $(a,b)$ can be written as 
$\cQ=\cA\cup (-\cA)$, where 
\[
\cA \coloneqq \left\{(a,b)\left|\ 0< -\frac{a}{2} < b < -2a \right.\right\}.
\]
Note that $\d_{a,b}<0$ and $\d_{-a,-b}>0$ for $(a,b)\in \cA$. 

The last condition we need  is the normalization \eqref{normalization}. Using \eqref{psi} and \eqref{J}, we see that 
\begin{equation}\label{psim1}
\psim = -\d_{a,b}\,q \left( E^{\a}\wedge E^{\b}\wedge E^{-\a-\b} - E^{-\a}\wedge E^{-\b}\wedge E^{\a+\b}\right).
\end{equation}
Thus, 
\[
\psip\wedge\psim = 2\,\d_{a,b}\,q^2\,\Omega.
\]
Combining this identity with \eqref{volomg} and \eqref{normalization} gives 
\begin{equation}\label{normalab}
q^2 = 2 \left|(b-a)(a+2b)(2a+b)\right|,
\end{equation}
which determines $q$ up to a sign. This provides two invariant SHF structures, namely $(\omega,\psip)$ and $(\omega,-\psip)$, which induce isomorphic $\SU(3)$-reductions. 
Hence, we assume $q$ to be positive.

Summing up, for any choice of real numbers $(a,b)\in\cQ$, there is an $\Ad({\mathrm T}^2)$-invariant SHF structure on $\gm$ defined by the 
2-form $\omega$ \eqref{omegaab} and the 3-form $\psip$ \eqref{psi}, with $q>0$ satisfying \eqref{normalab}. 
Moreover, the Ricci tensor of any metric $g$ in this family is $J$-Hermitian. 
Indeed, $\gm$ is the sum of mutually inequivalent ${\mathrm T}^2$-modules, 
and on each module the invariant bilinear form $\Ric(g)$ and the metric $g$ are a multiple of each other. 

Now, we investigate when two invariant SHF structures corresponding to different values of the real parameters $(a,b)\in\cQ$ are isomorphic. \par 

Since the transformation $(a,b)\mapsto (-a,-b)$ maps the $2$-form $\omega$ corresponding to $(a,b)$ into its opposite,  
it leaves the metric $\o(\cdot,J\cdot)$ invariant (cf.~\eqref{J}). 
Note that the standard embedding of $\gg$ into $\frsl(3,\bC)$ (see e.g.~\cite[ p.~446]{Hel}) is invariant under the action of the  conjugation $\theta$ of $\frsl(3,\bC)$ 
with respect to the real form $\frsl(3,\bR)$. The involution $\theta$ preserves $\gt$, and $\theta|_{\gt} =-\mathrm{Id}$. 
The induced map $\hat\theta:M\to M$ is a diffeomorphism with $\hat\theta^*(\o)=-\o$ and $\hat\theta^*(\psip)=\psip$. 
Thus, the SHF structures corresponding to the pairs $(a,b)$ and $(-a,-b)$ are isomorphic, and we can reduce to considering $(a,b)\in\cA$. 

For any nonzero $\lambda\in\R^{\sst+}$, the SHF structures associated with $(a,b)$ and $(\lambda a,\lambda b)$ are homothetic, i.e., 
the defining differential forms and the induced metrics are homothetic. 
Then, we can restrict to a subset of $\cA$ where the volume form is fixed, e.g.
\[
\cV \coloneqq \left\{(a,b)\in \cA\st (b-a)(a+2b)(2a+b)=-1\right\}.
\]  

We now claim that the SHF structures corresponding to the pairs $(a,b)$ and $(b,a)$ in $\cA$ are isomorphic. Indeed, the conjugation 
in $\G$ by the element   
\[
u\coloneqq \left(
\begin{array}{cc:c}
0 & 1 & 0\\
1 & 0 & 0\\ \hdashline
0 & 0 & -1
\end{array}
\right) 
\in \mathrm{S}(\U(2)\times\U(1))
\]
preserves the isotropy $\mathrm{T}^2$ mapping $z_{a,b}$ into $z_{b,a}$. 
Consequently, it induces a diffeomorphism $\phi_{u}:M\rightarrow M,$ which is easily seen to be an isomorphism of the considered SHF structures. 
Therefore, we can further reduce to the set
\[
\VSHF \coloneqq \left\{(a,b)\in\cV\ \left|\ 0< -a \leq b <-2a \right.\right\},
\]
which is represented in Figure \ref{figureset}. 

To conclude our investigation, we prove that the SHF structures corresponding to different points in $\VSHF$ are pairwise non-isomorphic 
by showing that the induced metrics have different scalar curvature. 
From the expression \eqref{psim1} of $\psim$ and the identity $d\psim=\sigma\W\omega$, 
we can determine the intrinsic torsion form $\sigma\in\left[\Lambda^{\sst1,1}_{\sst0}(\gm^*)\right]$ explicitly. 
Then, by \eqref{ScalSHF} we have
\[
\Scal(g) = -\frac{1}{2}|\sigma|^2 = -24\,N_{\a,\b}^2\left(a^2+ab+b^2\right).
\]
Using the method of Lagrange multipliers, it is straightforward to check that the function $\Scal(g)$ subject to the constraint $(b-a)(a+2b)(2a+b)=-1$ 
has a unique critical point at $C = \left(-\frac{1}{\sqrt[3]{2}},\frac{1}{\sqrt[3]{2}}\right)\in\VSHF$. 
Moreover, $\Scal(g)$ is easily seen to be strictly decreasing when the point $(a,b)\in\VSHF$ moves away from $C$.   

\begin{figure}[h!]
\centering
\begin{tikzpicture}
      \draw[->] (-4.5,0) -- (0.5,0); 
      \draw[-] (0.4,0) node[above] {$a$};
      \draw[->] (0,-0.5) -- (0,4.5); 
      \draw[-] (0,4.35) node[right] {$b$};
      \draw[scale=1.5,domain=-2.68:0,dotted,smooth,variable=\a,black] plot(\a, -0.5*\a);
      \draw[scale=1.5,domain=-1.34:0,dotted,smooth,variable=\a,black] plot(\a, -2*\a);
      \draw[scale=1.5,domain=-2.12:0,dotted,smooth,variable=\a,black] plot(\a, -\a);
      \draw[scale=1.5,domain=-1.954:-0.51149,dotted,smooth,variable=\a,black] plot({-1/((2*\a*\a*\a+3*\a*\a-3*\a-2)^(1/3))}, {-\a/((2*\a*\a*\a+3*\a*\a-3*\a-2)^(1/3))});
       \draw[scale=1.5,domain=-1.954:-1,thick,smooth,variable=\a,black] plot({-1/((2*\a*\a*\a+3*\a*\a-3*\a-2)^(1/3))}, {-\a/((2*\a*\a*\a+3*\a*\a-3*\a-2)^(1/3))});
\end{tikzpicture}
\caption{The set $\VSHF$.}\label{figureset}
\end{figure} 

\begin{remark}
Using the properties of the Chern connection $\nabla$ of a homogeneous almost Hermitian space (see e.g.~\cite[$\S$2]{Pod}), 
it is possible to check that the natural operator $\Lambda_{\gm}:\gm\rightarrow \End(\gm)$ associated with $\nabla$ is identically zero for all almost K\"ahler structures  
underlying the SHF structures parametrized by $\VSHF$. Consequently, all $(g,J)$ in this family share the same Chern connection, which 
coincides with the canonical connection of the homogeneous space $\SU(2,1)/\mathrm{T}^2$. 
\end{remark} 
\par

\medskip \noindent
{\bf2)} {\boldmath$M = \SO(4,1)/\U(2)$}\\
In this case, $\gg^{\sst{\bC}}=\so(5,\mathbb C)$. 
We fix the standard maximal abelian subalgebra $\gt$ of the compact real form $\so(5)$ and the corresponding root system $R=\{\pm \a,\pm\b,\pm(\a+\b),\pm(\a+2\b)\}$. 
Without loss of generality, we may choose $R_\gk = \{\pm(\a+2\b)\}$ and $\{\pm\a\}$ as compact roots, and $\{\pm(\a+\b),\pm \b\}$ as noncompact roots. 
Note that $R_\gk\cup\{\pm\a\}$ is the root system of $\gl^{\sst{\bC}}\cong\so(4,\bC)$. The tangent space $\gm$ splits as the sum of two inequivalent $\U(2)$-submodules 
$\gm=\gm_1\oplus\gm_2$, with $\dim_{\R}\gm_1 =2$ and $\dim_{\R}\gm_2 = 4$. 
In particular, if we define the vectors $v_\g,w_\g$ as in \eqref{realvect}, then $\gm_1=\mathrm{span}_\R(v_\a,w_\a)$ 
and $\gm_2=\mathrm{span}_\R(v_\b,w_\b,v_{\a+\b},w_{\a+\b})$.

Any invariant symplectic form $\o$ on $\gm$ is determined by a nonzero element $z$ in the one-dimensional center $\gz$ of $\gk \cong \mathfrak{u}(2)$. 
Since the root $\a+2\b\in R_\gk$ vanishes on $z$, we have $\a(z) = -2\b(z)$. 
Setting $\a(z) = ia$, $a\in \R\smallsetminus\{0\}$, we obtain the following expression for the complexified $\omega$ on $\gm^{\sst\C}$
\[
\omega = ia\,E^\a \W E^{-\a} -\frac12\,ia\,E^\b \W E^{-\b} + \frac12\,ia\,E^{\a+\b} \W E^{-\a-\b},
\]
$\{E^\g\}_{\g\in R}$ being the basis of $({\gm^{\sst{\bC}}})^*$ dual to $\{E_\g\}_{\g\in R}$.

We consider an invariant 3-form $\psi$ on $\gm$ and its complexification on $\gm^{\sst{\bC}}$. 
As in Lemma \ref{Lpsi}, the $\ad(\gt^{\sst{\bC}})$-invariance implies that $\psi$ is completely determined by the value
\[
\psi(E_\a,E_\b,E_{-\a-\b}) \coloneqq p + i q,
\]
and its conjugate 
\[
\psi(E_{-\a},E_{-\b},E_{\a+\b}) = - \overline{\psi(E_\a,E_\b,E_{-\a-\b})} = -p + iq,
\]
for some $p,q\in \bR$. 
In this case, we also have to check the invariance under $\Ad(\U(2))$. This follows  from the vanishing of 
$\psi(E_\a,E_\b,[E_{\a+2\b},E_{-\a-\b}])$, $\psi(E_\a,[E_{-\a-2\b},E_\b],E_{-\a-\b})$, and $\psi(E_{-\a},E_{-\b},[E_{-\a-2\b},E_{\a+\b}])$.

Using the same arguments as in the proofs of Lemma \ref{Lpsi} and Lemma \ref{Lstab}, we can show the following.
\begin{lemma} 
Let $\psi\in \Lambda^3(\gm^*)$ be a nonzero $\Ad(\U(2))$-invariant 3-form whose corresponding form on $M$ is closed. 
Then, the complexified $\psi$ on $\gm^{\sst\C}$ can be written as 
\begin{equation}\label{psiu2}
\psi = i q\, \left( E^{\a}\wedge E^{\b}\wedge E^{-\a-\b} + E^{-\a}\wedge E^{-\b}\wedge E^{\a+\b}\right),
\end{equation}
for a suitable $q\in\R\smallsetminus\{0\}$.
Consequently, $\psi$ is compatible with every invariant symplectic form $\o$, it is always stable, and it induces an invariant almost complex structure $J\in\End(\gm^{\sst\C})$ such that 
\[
J(E_\a) = -i\d_a\, E_\a,\quad J(E_{\b}) = -i\d_a\, E_{\b},\quad J(E_{\a+\b}) = i\d_a\, E_{\a+\b},
\]
where $\d_a$ is the sign of $a$. 
\end{lemma}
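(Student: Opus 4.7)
The plan is to imitate closely the proofs of Lemma \ref{Lpsi} and Lemma \ref{Lstab}, inserting one additional step to handle the invariance under the non-toral part of $\U(2)$, which in fact turns out to be automatic. First, I would use the $\ad(\gt^{\sst{\bC}})$-invariance of $\psi$ to conclude that $\psi(E_{\g_1},E_{\g_2},E_{\g_3})=0$ unless $\g_1+\g_2+\g_3=0$. Within $R_\gm=\{\pm\a,\pm\b,\pm(\a+\b)\}$, the only unordered triples with vanishing sum are $\{\a,\b,-\a-\b\}$ and $\{-\a,-\b,\a+\b\}$, so $\psi$ is entirely determined by $\psi(E_\a,E_\b,E_{-\a-\b})=p+iq$ and its conjugate $\psi(E_{-\a},E_{-\b},E_{\a+\b})=-p+iq$.

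Next, I would verify that $\gt^{\sst{\bC}}$-invariance automatically upgrades to full $\Ad(\U(2))$-invariance. Infinitesimally one needs $\ad(E_{\pm(\a+2\b)})\psi=0$, and $\gt^{\sst{\bC}}$-invariance forces the non-trivial checks to occur on root triples of total weight $\mp(\a+2\b)$. Since $2\a+2\b$, $\a+3\b$, $2\a+3\b$ and $2\b$ are not roots of the $B_2$ system, the only non-zero brackets are $[E_{\a+2\b},E_{-\b}]\propto E_{\a+\b}$ and $[E_{\a+2\b},E_{-\a-\b}]\propto E_\b$ (and their conjugates for $E_{-\a-2\b}$). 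The resulting sums evaluate $\psi$ on triples with a repeated entry such as $(E_\a,E_\b,E_\b)$, which vanish by antisymmetry. This realises the three identities indicated in the statement.

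Then, I would impose $d\psi=0$ via the Koszul formula on the three 4-tuples $(E_{\g_1},E_{-\g_1},E_{\g_2},E_{-\g_2})$ with $\{\g_1,\g_2\}\subset\{\a,\b,\a+\b\}$, reproducing mutatis mutandis the computation of Lemma \ref{Lpsi}. The extra compact root $\pm(\a+2\b)$ only contributes brackets lying in $\gk^{\sst{\bC}}$, hence produces no new terms. Using $N_{-\a,-\b}=-N_{\a,\b}$ and the analogous identities on the other roots, each evaluation reduces to a non-zero real multiple of $p$, forcing $p=0$ and thereby establishing formula \eqref{psiu2}.

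Finally, the remaining assertions are immediate adaptations of Lemma \ref{Lstab}. The compatibility $\o\wedge\psi=0$ holds because $\Lambda^5(\gm^*)^{\U(2)}\cong(\gm^*)^{\U(2)}$ via contraction against the invariant top form, and the latter space vanishes since the splitting $\gm=\gm_1\oplus\gm_2$ has no $\U(2)$-fixed vectors (both summands being non-trivial irreducible $\U(2)$-modules). For stability and the formula for $J$, I would repeat verbatim the endomorphism computation of Lemma \ref{Lstab} with the real volume form $\d_a\Omega$ (one checks that $\o^3/6=(a^3/4)\,\Omega$, so $\d_a$ is indeed the relevant orientation sign), obtaining $S_\psi(E_\g)=c_\g E_\g$ with $c_\a=c_\b=-c_{\a+\b}=-\d_a\,iq^2$, so that $P(\psi)=-q^4<0$ and \eqref{Jpsi} yields the stated expression for $J$. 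The main potential obstacle is the $\U(2)$-invariance step, but as just outlined it collapses to the antisymmetry of $\psi$.
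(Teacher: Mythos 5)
Your proposal is correct and follows essentially the same route as the paper, which proves this lemma simply by noting that the arguments of Lemmas \ref{Lpsi} and \ref{Lstab} carry over once one checks---exactly as you do---that $\Ad(\U(2))$-invariance imposes nothing beyond $\ad(\gt^{\sst\C})$-invariance, since the only potentially nonvanishing terms such as $\psi(E_\a,E_\b,[E_{\a+2\b},E_{-\a-\b}])$ reduce to evaluations on triples with a repeated entry. The only cosmetic slip is in your bookkeeping of where the non-trivial invariance checks live (the triple producing $(E_\a,E_\b,E_\b)$ has total weight $0$, not $\mp(\a+2\b)$), but this does not affect the argument.
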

The $J$-invariant symmetric bilinear form $g \coloneqq \o(\cdot,J\cdot)$ is positive definite for all $a\in\R\smallsetminus\{0\}$. Indeed
\[
g(v_\a,v_\a) = 2\d_a a,\quad 	g(v_\b,v_\b) = \d_a a,\quad	g(v_{\a+\b},v_{\a+\b}) = \d_a a.
\]
Finally, we observe that 
\[
\o^3 = \frac32a^3\,\Omega,\quad \psip\wedge\psim = 2\d_a\,q^2\,\Omega,
\] 
where $\Omega$ is a real volume form on $\gm^{\sst\C}$ defined as in \eqref{Omega}. 
Therefore, the normalization condition gives 
\[
q^2 = \frac{1}{2}\,\d_a\, a^3.
\]

Summarizing, we have obtained a 1-parameter family of invariant SHF structures on $M$ which are clearly pairwise homothetic. 
As the tangent space $\gm$ has two mutually inequivalent $\U(2)$-submodules, on each module the Ricci tensor of the SHF structure is a multiple of the metric.  
Hence, it is $J$-Hermitian.	
\bigskip

\noindent  {\bf Acknowledgements.} The authors would like to thank Anna Fino for useful comments.


\end{document}